\theoremstyle{plain}{
    \newtheorem{theorem}{Theorem}[section]
    \newtheorem{lem}[theorem]{Lemma}
    
    \newtheorem{prop}[theorem]{Proposition}
    
}
\theoremstyle{definition}{
    \newtheorem{defn}[theorem]{Definition}
    \newtheorem{defn-thm}[theorem]{Definition-Theorem}

}
\theoremstyle{remark}{
    \newtheorem{rem}[theorem]{Remark}

}
\begin{document}

\title[Log smooth toric del Pezzo pairs]{Classification of log smooth toric del
Pezzo pairs}

\author[A. Napame]{Achim NAPAME}

\address{Univ Brest, UMR CNRS 6205, Laboratoire de Mathématiques de Bretagne
Atlantique, France}

\email{achim.napame@univ-brest.fr}


\begin{abstract}
We give a description of all log-Fano pairs $(X, \, D)$ where $X$ is a smooth toric
surface and $D$ a reduced simple normal crossing divisor such that the pair $(X, \, D)$
is equivariant.
\end{abstract}

\maketitle

\section{Introduction}

The Enriques-Kodaira classification gives a classification of complex compact surfaces
by using their Kodaira dimension. Nonsingular projective minimal surfaces with
Kodaira dimension $-\infty$ have an important position in this problem of classification,
they correspond in the MMP-terminology to {\it Mori's fiber spaces}
(see \cite[Theorem 1.5.5]{Mat02}).
Del Pezzo surfaces form a class of surfaces with kodaira dimension $-\infty$.

A {\it toric variety} $X$ of dimension $n$ is an irreducible variety that contains a
torus $T \simeq (\Cm^{\ast})^n$ as a dense open subset, together with an action of
$T$ on $X$ that extends the natural action of $T$ on itself.
We say that a toric surface is {\it log del Pezzo} if it has at worst log-terminal
singularities and its anticanonical divisor is a $\Q$-Cartier ample divisor.
According to \cite[Proposition 4.2]{Dais17}, there is a one-to-one correspondence between
toric log del Pezzo surfaces and LDP-polygons which are convex lattice polygons
containing the origin in their interior and having only primitive vertices. The
classification of toric log del Pezzo surfaces is an open problem. There are results on
the classification according to the index of the anticanonical divisor (see
\cite[Section 8.3]{CLS} for index one and \cite{KKN10} for index two).

Motivated by {\it Iitaka's philosophy}, in this paper we are interested by the question
of classification of log smooth toric del Pezzo pairs.
A pair $(X, \, D)$ is called a {\it log smooth del Pezzo pair} if $X$ is a smooth
surface and $D$ a simple normal crossing divisor such that $-(K_X + D)$ is ample.
Moreover, we say that $(X, \, D)$ is {\it toric} if $X$ is toric and the sheaf
$T_{X}(- \log D)$ is an equivariant sheaf which means that $D$ is an invariant divisor
under the action of the torus $T$ of $X$.

\begin{theorem}\label{art201}
Let $X$ be a complete smooth toric surface such that $\rk \Pic(X) \geq 3$. For any
reduced invariant divisor $D$ of $X$, the pair $(X, \, D)$ is not toric log del Pezzo.
\end{theorem}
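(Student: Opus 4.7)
My plan is to translate the ampleness of $-(K_X+D)$ into combinatorial conditions on the $b$-sequence of $X$ and the index set $I$ such that $D=\sum_{i\in I}D_i$, then derive a contradiction. Setting $b_i:=-D_i^2$, where $D_1,\ldots,D_r$ are the torus-invariant prime divisors associated to the primitive ray generators $v_1,\ldots,v_r$ of the fan (in cyclic order), one has $v_{i-1}+v_{i+1}=b_i v_i$, $r=\rho(X)+2\geq 5$, and $\sum_i b_i=3r-12$ by Noether's formula. I would first observe that the case $D=0$ must be excluded from the statement (otherwise $(X,0)$ is log del Pezzo for $X=dP_7$ or $dP_6$), so $I$ is nonempty. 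Computing $-(K_X+D)\cdot D_j$ for each $j$, the toric Kleiman criterion translates ampleness into the conditions
\begin{itemize}
\item[(A)] every maximal arc of $I$ in the cyclic order has length $\leq 2$;
\item[(B)] for each $j\notin I$, $b_j+\#(\{j-1,j+1\}\cap I)\leq 1$.
\end{itemize}

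Next I would induct on $\rho(X)$, with base case $\rho(X)=3$. For $\rho(X)\geq 4$, the surface $X$ is non-minimal and every $(-1)$-curve on a smooth toric surface is torus-invariant, so I pick a ray with $b_i=1$ and contract $D_i$ via $\pi\colon X\to X'$; the new surface $X'$ has $\rho(X')=\rho(X)-1$, with $b'_j=b_j$ for $j\neq i\pm 1$ and $b'_{i\pm 1}=b_{i\pm 1}-1$. If $i\notin I$, condition (B) forces $\{i-1,i+1\}\cap I=\emptyset$, and a short check shows that $(X',\pi_*D)$ still satisfies (A) and (B) with nonempty boundary, contradicting the inductive hypothesis; the case $i\in I$ with $|I|\geq 2$ is handled similarly. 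In the remaining case $i\in I$ with $|I|=1$, one has $\pi_*D=0$, so $X'$ would need to be a smooth toric del Pezzo surface; but (B) gives $b_{i\pm 1}\leq 0$, hence $b'_{i\pm 1}\leq -1$ at two rays of $X'$ that become adjacent after contraction. Direct inspection of the short list of smooth toric del Pezzo surfaces of Picard rank $\geq 2$, namely the Hirzebruch surfaces $F_0, F_1$ together with $dP_7$ and $dP_6$, shows that none admits two adjacent rays both with $b\leq -1$, closing this case.

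The hard part will be the base case $r=5$. Here I plan to use the fact that, up to cyclic relabelling and reflection, every smooth complete toric surface with 5 rays has $b$-sequence $(1-n,1,1,n,0)$ for some $n\in\mathbb{Z}$ (this follows by solving the $SL_2(\mathbb{Z})$-closure of the ray sequence, equivalently from the relation $b_4(1-b_2b_3)=1-b_2$ obtained after normalising $v_1=e_1$, $v_2=e_2$). Since $b_2=b_3=1$, condition (B) applied at $j=2$ (resp.\ $j=3$) shows that if $2\notin I$ then $\{1,3\}\cap I=\emptyset$, and if $3\notin I$ then $\{2,4\}\cap I=\emptyset$. Combined with $I\neq\emptyset$ and (A), a short case analysis narrows $I$ to one of $\{5\}$, $\{2,3\}$, or $\{2,3,5\}$. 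For
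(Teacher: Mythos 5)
Your approach is sound and genuinely different from the paper's. The paper argues directly in two propositions: for $\card(\Delta)\ge 3$ it shows that $0$ would be a vertex of the polytope of $-(K_X+D)$ lying on at least three supporting hyperplanes, which cannot happen for an ample divisor (Proposition \ref{art216}); for $\card(\Delta)\in\{1,2\}$ it shows that the inequalities $\gamma_j\le 0$ forced at the rays adjacent to the components of $D$ are incompatible with the cyclic arrangement of the $u_i$ once $n\ge 5$ (Proposition \ref{art217}). You instead encode ampleness by the two numerical conditions (A) and (B) --- both correctly derived from the intersection numbers (\ref{art208}) via the toric Kleiman criterion --- and run an induction on the Picard rank by contracting an invariant $(-1)$-curve. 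I checked the inductive step: in each subcase the contracted pair again satisfies (A) and (B) (using $b'_{i\pm1}=b_{i\pm1}-1$ together with (B) applied at $j=i\pm1$), and in the subcase $|I|=1$ the surface $X'$ is indeed forced to be a smooth toric del Pezzo surface of Picard rank $\ge 3$, i.e.\ the toric blow-up of $\Pm^2$ at two or three fixed points, whose $b$-sequences $(1,1,0,0,1)$ and $(1,1,1,1,1,1)$ contain no entry $\le -1$. Your preliminary remark that $D=0$ must be excluded is also correct and consistent with the paper, which only treats $\Delta\neq\emptyset$. The trade-off: the paper's argument is shorter and self-contained, whereas yours needs the existence of invariant $(-1)$-curves, the blow-down formulas, and the classifications of five-ray fans and of smooth toric del Pezzo surfaces, but in exchange it isolates a clean, reusable ampleness criterion and is more mechanical.

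The one genuine defect is that your text stops mid-sentence: the base case $r=5$ is not finished. You have reduced to $I\in\bigl\{\{5\},\,\{2,3\},\,\{2,3,5\}\bigr\}$ for the $b$-sequence $(1-n,1,1,n,0)$, but these still have to be ruled out. This is immediate from condition (B) applied at $j=1$ (where $b_1=1-n$, with neighbours $5$ and $2$) and at $j=4$ (where $b_4=n$, with neighbours $3$ and $5$): for $I=\{5\}$ and for $I=\{2,3\}$ one gets $2-n\le 1$ and $n+1\le 1$, i.e.\ $n\ge 1$ and $n\le 0$; for $I=\{2,3,5\}$ one gets $3-n\le 1$ and $n+2\le 1$, i.e.\ $n\ge 2$ and $n\le -1$. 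Each pair is contradictory, so no admissible $I$ exists and the base case closes. Please write out this final step, and while you are at it, spell out the verification of (A) and (B) for $(X',\pi_*D)$ in the subcase $i\in I$, $|I|\ge 2$, which you currently dismiss as ``handled similarly''.
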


According to this Theorem, to find the classification of log smooth toric del Pezzo
pairs, it suffices to consider complete smooth toric surfaces with Picard rank one
or two.
The variety $\Pm^2 = \Cm \Pm^2$ is the unique smooth toric surface of Picard rank one.
We denote by $[x_0: x_1: x_2]$ the homogeneous coordinates of $\Pm^2$ and $D_i$ the
divisor defined by $x_i = 0$.

\begin{theorem}\label{art202}
If $X = \Pm^2$, then the log smooth pair $(X, \, D)$ is toric log del Pezzo if and only
if $~D \in \{D_0, \, D_1, \, D_2 \} \cup \{D_0 + D_1, \, D_0 + D_2, \, D_1 + D_2 \} ~.$
\end{theorem}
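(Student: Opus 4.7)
The plan is to exploit the extreme simplicity of $\mathbb{P}^2$: the Picard group $\Pic(\mathbb{P}^2) \cong \mathbb{Z}$ is generated by the hyperplane class $H$, each toric divisor $D_i$ satisfies $D_i \sim H$, and $-K_{\mathbb{P}^2} \sim D_0 + D_1 + D_2 \sim 3H$. With this, the classification reduces to a short enumeration.

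First I would parametrise the candidates: since $D$ must be reduced and invariant under the torus action, $D$ is of the form $D = \sum_{i \in I} D_i$ for some subset $I \subseteq \{0, 1, 2\}$. The three coordinate lines $D_0, D_1, D_2$ meet pairwise transversally at three distinct points (no triple intersection), so any such sum is automatically a simple normal crossing divisor; the SNC condition thus imposes no restriction, and the pair $(\mathbb{P}^2, D)$ is log smooth for every $I$.

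Next I would test ampleness of $-(K_X + D)$. Since $\Pic(\mathbb{P}^2) = \mathbb{Z} \cdot H$ and a divisor on $\mathbb{P}^2$ is ample if and only if its degree is strictly positive, the class
\[
-(K_X + D) \sim (3 - |I|)H
\]
is ample if and only if $|I| \leq 2$. The case $|I| = 3$ gives $-(K_X+D) \sim 0$, which is not ample, so $I = \{0,1,2\}$ is excluded. The remaining cases $|I| = 1$ and $|I| = 2$ account for exactly the six divisors listed in the theorem, yielding the desired equivalence (under the standing convention that the boundary $D$ of a pair is a nonzero divisor).

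There is no serious obstacle here: the argument is essentially a bookkeeping exercise made transparent by $\Pic(\mathbb{P}^2) = \mathbb{Z}$, in contrast to the higher-Picard-rank case treated in \Cref{art201}, where ampleness must be tested against every torus-invariant curve.
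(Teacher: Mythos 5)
Your proof is correct and follows essentially the same route as the paper: both reduce to the linear equivalences $D_i \sim H$ (the paper writes $D_i \sim D_0$), compute $-(K_X+D) \sim (3-\card(I))H$, and conclude by positivity of the degree. The only additions are your (welcome) remarks on the SNC condition and the exclusion of $D=0$, which do not change the argument.
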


Every smooth toric surface of Picard rank two is of the form
$X = \Pm \left( \Oc_{\Pm^1} \oplus \Oc_{\Pm^1}(r) \right)$ with $r \in \N$
(we refer to \cite[Theorem 7.3.7]{CLS} and \cite{Kle88}). We denote
by $\pi : X \imp \Pm^1$ the projection on the base $\Pm^1$.
Let $E_0 = \{0 \}$ and $E_{\infty} = \{\infty\}$ be two divisors of
$\Pm^1 = \Cm \cup \{\infty \}$ (the identification come from
$[x_0: x_1] \giv x_1/x_0$).
Let $\Fc_0$ and $\Fc_1$ be two rank one vector bundles over $\Pm^1$ whose sheaf of
sections are respectively given by $\Oc_{\Pm^1}$ and $\Oc_{\Pm^1}(r)$. We denote by
$s_i$ a section of $\Fc_i$.
The irreducible $T$-invariant divisors of $X$ are given by
$$
D_0 = \{s_0 = 0 \} ~, \quad D_1 = \pi^{-1}(E_0) ~, \quad D_2 = \{s_1 = 0 \} \quad
\text{and} \quad D_3 = \pi^{-1}(E_{\infty}) ~.
$$
If $D = \sum_{i \in I}{D_i}$ with $I \subset \{0, \, 1, \, 2, \, 3 \}$ such that
$\card(I) \geq 3$, then $-(K_X + D)$ is not ample by Proposition \ref{art216}.
He have here the classification of toric log del Pezzo pairs for toric surfaces with
Picard rank two.

\begin{theorem}\label{art203}
Let $X = \Pm \left( \Oc_{\Pm^1} \oplus \Oc_{\Pm^1}(r) \right)$ with $r \in \N$. Then :
\begin{enumerate}
\item
$-K_X$ or $-(K_X + D_0)$ are ample if and only if $r \in \{0, \, 1 \}$.
\item
If $D \in \{D_1, \, D_3, \, D_0 + D_1, \, D_0 + D_3 \}$, $-(K_X + D)$ is ample if and
only if $r = 0$.
\item
If $D \in \{D_2, \, D_2 + D_1, \, D_2 + D_3 \}$, $-(K_X + D)$ is ample if and only if
$r \in \N$.
\item
if $D \in \{D_0 + D_2, \, D_1 + D_3 \}$, $-(K_X+D)$ is not ample for any $r \in \N$.
\end{enumerate}
\end{theorem}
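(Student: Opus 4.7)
The plan is to apply the toric Nakai--Moishezon criterion on the smooth projective toric surface $X = \Pm(\Oc_{\Pm^1} \oplus \Oc_{\Pm^1}(r))$: a $\Q$-Cartier divisor is ample if and only if it has strictly positive intersection with every $T$-invariant curve. These curves are precisely $D_0, D_1, D_2, D_3$, so since $-K_X = D_0 + D_1 + D_2 + D_3$, each case of the theorem reduces to checking positivity of four integers of the form $\bigl(\sum_{i \notin I} D_i\bigr) \cdot D_j$ with $j \in \{0, 1, 2, 3\}$, where $D = \sum_{i \in I} D_i$.

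The preliminary step is to record the intersection table on $X$. From the standard fan description, the fibers satisfy $D_1^2 = D_3^2 = 0$; adjacent invariant divisors meet transversally in a single fixed point, so $D_i \cdot D_{i+1} = 1$ (indices modulo $4$); and opposite pairs are disjoint: $D_0 \cdot D_2 = D_1 \cdot D_3 = 0$. The two sections satisfy $D_0^2 = r$ and $D_2^2 = -r$, the sign being dictated by the paper's convention: in Grothendieck's projectivization, $D_0 = \{s_0 = 0\}$ is the section coming from the quotient $\Oc_{\Pm^1} \oplus \Oc_{\Pm^1}(r) \twoheadrightarrow \Oc_{\Pm^1}(r)$, whose normal bundle is $\Oc_{\Pm^1}(r)$ of degree $r$.

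With this intersection table each of (1)--(4) reduces to a short arithmetic check, and the two-fold symmetry $D_1 \leftrightarrow D_3$ exchanging the fibers halves the number of distinct computations. The binding inequality in every case is one of $2 - r > 0$, $1 - r > 0$, $-r > 0$, or $0 > 0$. Part (1) is controlled by $2 - r > 0$ (arising as $-K_X \cdot D_2$ and as $-(K_X + D_0) \cdot D_2$), forcing $r \in \{0, 1\}$. Part (2) tightens this to $1 - r > 0$, because removing a fiber or the section $D_0$ from $-K_X$ further lowers the intersection with $D_2$ by one; this forces $r = 0$. Part (3) always holds because each admissible $D$ contains the negative section $D_2$, so $D_2$ does not appear in the expansion of $-(K_X + D)$ and the $-r$ contribution is absent from every intersection product. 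Part (4) always fails, either because $(D_1 + D_3) \cdot D_1 = 0$ (for $D = D_0 + D_2$), or because $(D_0 + D_2) \cdot D_2 = -r$ and $(D_0 + D_2) \cdot D_0 = r$ cannot both be strictly positive (for $D = D_1 + D_3$).

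The argument is essentially a bookkeeping exercise; the only mild obstacle is tracking which of the four intersection numbers is the binding one in each of the ten cases and treating the degenerate $r = 0$ subcase of part (4) separately.
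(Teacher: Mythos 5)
Your proof is correct and takes essentially the same route as the paper: both reduce to the toric Kleiman criterion on the four invariant curves, and your intersection table ($D_0^2=r$, $D_2^2=-r$, $D_1^2=D_3^2=0$, adjacent products $1$, opposite products $0$) is exactly the paper's \eqref{art208} with $(\gamma_0,\gamma_1,\gamma_2,\gamma_3)=(-r,0,r,0)$; the paper only differs cosmetically by first rewriting each $-(K_X+D)$ as $a_0D_0+a_3D_3$ via linear equivalence before checking positivity. One small slip in your justification of part (2): removing $D_0$ from $-K_X$ does \emph{not} lower the intersection with $D_2$ (since $D_0\cdot D_2=0$, and indeed $-(K_X+D_0)\cdot D_2=2-r$, consistent with part (1)); the binding inequality $1-r>0$ comes solely from the single fiber contained in each admissible $D$ of part (2), which still yields your conclusion.
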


In the first part of this paper, we will recall some properties about toric varieties
and we will give the characterization of ample divisors on toric surfaces. In the second
part (Section \ref{artsec202}), we will describe the family of log smooth toric del
Pezzo pairs.

\subsection*{Acknowledgments}
I would like to thank my advisor Carl \textsc{Tipler} for our discussions on this
subject.

\section{Ample divisors on toric surfaces}

\subsection{Toric varieties}
Let $N$ be a rank $n$ lattice and $M = \Hom_{\Z}(N, \, \Z)$ be its dual with pairing
$\langle \, \cdot \, , \, \cdot \, \rangle : M \times N \imp \Z$. The lattice $N$ is the
lattice of one-parameter subgroups of $N \otimes_{\Z} \Cm^{\ast}$.
For $\K = \R ~ \text{or} ~ \Cm$, we define $N_{\K} = N \otimes_{\Z} \K$ and
$M_{\K} = M \otimes_{\Z} \K$.
A fan $\Sigma$ in $N_{\R}$ is a set of rational strongly convex polyhedral cone
in $N_{\R}$ such that:
\begin{itemize}
\item
Each face of a cone in $\Sigma$ is also a cone in $\Sigma$;
\item
The intersection of two cones in $\Sigma$ is a face of each.
\end{itemize}

\begin{defn}[{\cite[Definition 1.2.16]{CLS}}]
A cone $\sigma$ in $N_{\R}$ is {\it smooth} if its minimal generators form part of a
$\Z$-basis of $N$.
\end{defn}

By \cite[Definition 3.1.18]{CLS}, a fan $\Sigma$ is {\it smooth} if every cone $\sigma$
in $\Sigma$ is smooth. A fan $\Sigma$ is {\it complete} if
$$
\bigcup_{\sigma \in \Sigma}{\sigma} = N_{\R} ~.
$$
For $\sigma \in \Sigma$, let $U_{\sigma} = \Spec( \Cm[S_{\sigma}])$ where
$\Cm[S_{\sigma}]$ is the semi-group algebra of
$$
S_{\sigma} = \sigma^{\vee} \cap M = \{ m \in M \, : \, \langle m, \, u \rangle \geq 0
~ \text{for all}~ u \in \sigma \} ~~.
$$
If $\sigma, \, \sigma' \in \Sigma$, we have $U_{\sigma} \cap U_{\sigma'} =
U_{\sigma \cap \sigma'}$.
We denote by $X_{\Sigma}$ the toric variety associated to a fan $\Sigma\,$; the variety
$X_{\Sigma}$ is obtained by gluing affine charts $(U_{\sigma})_{\sigma \in \Sigma} \,$.
\\[0.25cm]
The variety $X_{\Sigma}$ is normal and its torus is $T = N \otimes_{\Z} \Cm^{\ast}$.
Let $\Sigma(k)$ be the set of $k$-dimensional cones of $\Sigma$. There is a bijective
correspondence between cones $\sigma \in \Sigma$ and $T$-orbits $O(\sigma)$ in
$X_{\Sigma} \,$; moreover we have $\dim O(\sigma) = \dim(X_{\Sigma}) - \dim(\sigma)$.
\\
For any ray $\rho \in \Sigma(1)$, there is a Weil divisor $D_{\rho}$ defined as the
Zariski closure of $O(\rho)$ in $X_{\Sigma}\,$.
We denote by $u_{\rho} \in N$ the minimal generator of $\rho \in \Sigma(1)$.
By \cite[Theorem 4.1.3]{CLS}, if $N_{\R} = \Span(u_{\rho} : \rho \in \Sigma(1))$ then
\begin{equation}\label{art204}
\card(\,  \Sigma(1) \,) = \dim(X_{\Sigma}) + \rk \Cl(X_{\Sigma}) ~.
\end{equation}
Let $X$ be the toric variety associated to a fan $\Sigma$. We say that $X$
is {\it smooth} (resp. {\it complete}) if and only if $\Sigma$ is smooth
(resp. complete). We know the expression of the canonical divisor $K_{X}$.

\begin{theorem}[{\cite[Theorem 8.2.3]{CLS}}]\label{art205}
The canonical divisor of $X$ is the torus invariant divisor
$$
K_{X} = - \sum_{\rho \in \Sigma(1)}{D_{\rho}} ~.
$$
\end{theorem}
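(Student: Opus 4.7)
The plan is to exhibit a distinguished rational section of $\omega_{X} = \bigwedge^{n} \Omega_{X}$ (with $n = \dim X$) and show that its divisor equals $-\sum_{\rho} D_{\rho}$. Since $X$ is normal, any Weil divisor is determined by its restriction to an open subset whose complement has codimension at least two. Every one-dimensional cone is smooth, so each $U_{\rho}$ with $\rho \in \Sigma(1)$ is smooth; consequently the smooth locus of $X$ contains $T \cup \bigcup_{\rho \in \Sigma(1)} U_{\rho}$, and it suffices to compute $K_{X}$ on this open set.

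Fix a $\Z$-basis $m_{1}, \ldots, m_{n}$ of $M$ and form the $T$-invariant rational top form
$$
\omega := \frac{d\chi^{m_{1}}}{\chi^{m_{1}}} \wedge \cdots \wedge \frac{d\chi^{m_{n}}}{\chi^{m_{n}}} ~.
$$
This is regular and nowhere vanishing on $T$, so its divisor on $X$ is supported on $X \setminus T = \bigcup_{\rho \in \Sigma(1)} D_{\rho}$ and is necessarily of the form $\sum_{\rho} a_{\rho} D_{\rho}$ with $a_{\rho} = \operatorname{ord}_{D_{\rho}}(\omega) \in \Z$.

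To compute each $a_{\rho}$, extend the primitive generator $u_{\rho}$ to a $\Z$-basis $u_{\rho} = f_{1}, f_{2}, \ldots, f_{n}$ of $N$ and let $f_{1}^{*}, \ldots, f_{n}^{*}$ be the dual basis of $M$. With $y_{i} := \chi^{f_{i}^{*}}$ one has $U_{\rho} \simeq \Spec \Cm[y_{1}, y_{2}^{\pm 1}, \ldots, y_{n}^{\pm 1}]$ and $D_{\rho} \cap U_{\rho} = \{y_{1} = 0\}$. The transition matrix $A \in GL_{n}(\Z)$ between the bases $(m_{j})$ and $(f_{i}^{*})$ has determinant $\pm 1$, and the additivity $\frac{d\chi^{m+m'}}{\chi^{m+m'}} = \frac{d\chi^{m}}{\chi^{m}} + \frac{d\chi^{m'}}{\chi^{m'}}$ gives
$$
\omega = \det(A) \, \frac{dy_{1}}{y_{1}} \wedge \cdots \wedge \frac{dy_{n}}{y_{n}} = \pm \frac{dy_{1}}{y_{1}} \wedge \cdots \wedge \frac{dy_{n}}{y_{n}} ~,
$$
which has a simple pole along $\{y_{1} = 0\}$. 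Hence $a_{\rho} = -1$ for every $\rho \in \Sigma(1)$, and $K_{X} = \operatorname{div}(\omega) = -\sum_{\rho \in \Sigma(1)} D_{\rho}$.

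The main technical step is the change-of-basis identity above: checking that the wedge product of logarithmic derivatives transforms by the determinant of the transition matrix, so that a $\Z$-basis change produces only a sign and the pole order along $D_{\rho}$ equals exactly $-1$. A secondary point is the reduction to the smooth locus, which relies on the normality of $X$ together with the fact that every one-dimensional cone is smooth, so that $U_{\rho}$ is smooth in a neighborhood of the generic point of $D_{\rho}$ and $\operatorname{ord}_{D_{\rho}}$ is genuinely the valuation along a Cartier divisor there.
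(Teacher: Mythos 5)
The paper does not prove this statement; it is quoted directly from \cite[Theorem 8.2.3]{CLS} without proof. Your argument --- taking the $T$-invariant logarithmic top form $\frac{d\chi^{m_1}}{\chi^{m_1}} \wedge \cdots \wedge \frac{d\chi^{m_n}}{\chi^{m_n}}$, using normality to reduce the computation of its divisor to the smooth open set $T \cup \bigcup_{\rho} U_{\rho}$ whose complement has codimension two, and checking via a unimodular change of basis that the form has a simple pole along each $D_{\rho}$ --- is correct and is essentially the standard proof given in that reference.
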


A Cartier divisor of $X$ is ample, if it satisfies the toric Kleiman Criterion.

\begin{theorem}[Toric Kleiman Criterion, {\cite[Theorem 6.3.13]{CLS}}]\label{art206}
Let $D$ be a Cartier divisor on a complete toric variety $X$. Then $D$ is ample if
and only if $D \cdot C > 0$ for all torus-invariant irreducible curves $C \subset X$.
\end{theorem}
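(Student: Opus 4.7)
The plan is to deduce the toric statement from the classical Kleiman ampleness criterion, which asserts that a Cartier divisor $D$ on a complete variety is ample if and only if $D \cdot \gamma > 0$ for every nonzero class $\gamma$ in the closed Mori cone $\overline{NE}(X)$. With this tool in hand, the forward direction is immediate: an ample divisor intersects every irreducible curve positively, hence in particular every torus-invariant irreducible curve.

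The content is in the converse. The structural input I would use is that on a complete toric variety $X_{\Sigma}$, the cone $\overline{NE}(X_{\Sigma})$ is a rational polyhedral cone generated by the finitely many classes $[V(\tau)]$ of torus-invariant irreducible curves, where $\tau$ runs through the codimension-one cones of $\Sigma$ (so $V(\tau) = \overline{O(\tau)}$ is a complete toric curve). I would first establish this by analysing the \emph{wall relations} attached to each such $\tau$: if $\tau$ separates two maximal cones $\sigma_{+}$ and $\sigma_{-}$, the unique (up to scale) linear relation among the primitive generators of the rays of $\sigma_{+} \cup \sigma_{-}$ realises $[V(\tau)]$ as an extremal curve class in $N_1(X_{\Sigma})_{\R}$. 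A torus-equivariant degeneration (or averaging) argument then shows that any effective $1$-cycle is numerically equivalent to a nonnegative $\R$-combination of the classes $[V(\tau)]$. Because there are only finitely many walls, the resulting cone is finitely generated, hence automatically closed.

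Once this description of $\overline{NE}(X_{\Sigma})$ is in place, the converse becomes formal. Assuming $D \cdot V(\tau) > 0$ for every $\tau \in \Sigma(n-1)$, the divisor $D$ is strictly positive on each of the finitely many generators of $\overline{NE}(X_{\Sigma})$, hence on every nonzero class in this cone, and the classical Kleiman criterion delivers ampleness of $D$.

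The main obstacle is the structural claim that torus-invariant curves generate the Mori cone \emph{as a convex cone}, not merely that their classes span $N_1(X_{\Sigma})_{\R}$ as a vector space. This is exactly what the wall-relation analysis coupled with the equivariant-degeneration step provides; once it is available, the remainder of the argument is a direct application of classical Kleiman and requires no further toric input.
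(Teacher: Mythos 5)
The paper does not actually prove this statement: it is quoted from \cite[Theorem 6.3.13]{CLS}, where the proof goes through support functions --- one shows that $D$ is ample if and only if its support function $\varphi_D$ is strictly convex, and that strict convexity is equivalent to the system of wall inequalities, whose left-hand sides are exactly the intersection numbers $D\cdot V(\tau)$ for $\tau\in\Sigma(n-1)$. Your route is the other standard one: classical Kleiman plus the Toric Cone Theorem, i.e.\ the assertion that $\overline{NE}(X_\Sigma)$ is the (finitely generated, hence closed) convex cone spanned by the classes $[V(\tau)]$. The purely formal part at the end is fine: a class that is strictly positive against every generator of a finitely generated cone is strictly positive against every nonzero element of that cone.

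There are, however, two genuine gaps. First, the classical Kleiman criterion in the form you invoke is a theorem about \emph{projective} varieties, whereas the statement here is for \emph{complete} toric varieties; in dimension at least three there exist complete non-projective toric varieties, and part of the content of the toric criterion is precisely that positivity on the invariant curves forces projectivity. Your reduction cannot produce that conclusion, since you would need to know $X$ is projective before applying Kleiman. (For the present paper this is invisible, because every complete toric surface is projective, but the theorem is stated in full generality.) Second, the structural claim that the invariant curves generate the Mori cone \emph{as a convex cone} --- which you correctly identify as the crux --- is only gestured at via ``equivariant degeneration or averaging''. That claim is essentially the Toric Cone Theorem; in \cite{CLS} it is deduced \emph{after} the nef/ample criteria by a duality argument, so filling in your sketch by the textbook route would be circular. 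A direct degeneration proof (in the spirit of Reid's analysis of the toric Mori cone) does exist, but it is the real mathematical content here and cannot be waved through in one sentence.
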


\subsection{Description of complete toric surfaces}\label{artsec201}
We assume that $N = \Z^2$, $M = \Z^2$ and the pairing
$\langle \, \cdot \, , \, \cdot \, \rangle : M \times N \imp \Z$ is the dot product.
Let $\Sigma$ be a smooth complete fan in $\R^2$ and $X$ the toric surface associated
to $\Sigma$. We denote by $T$ the torus of $X$. There is a family of primitive vectors
$\{ u_i \in N : ~ 0 \leq i \leq n-1 \}$
with $n \geq 3$ such that
$$
\Sigma = \{0 \} \cup \left\lbrace \Cone(u_i) : \, 0 \leq i \leq n-1 \right\rbrace \cup
\left\lbrace \Cone(u_i, \, u_{i+1}) : \, 0 \leq i \leq n-1 \right\rbrace
$$
where $u_{n} = u_0$.
For all $i \in \{0, \ldots, \, n-1\}$, we set
$\rho_i = \Cone(u_i)$, $\sigma_i = \Cone(u_i, \, u_{i+1})$ and $D_i$ the divisor
corresponding to the ray $\rho_i$. As $\Sigma$ is smooth,
we can assume that $\det(u_i, \, u_{i+1}) = \pm 1$. From now on, we will assume that
$\det(u_i, \, u_{i+1}) = 1$.
\\
For all $i \in \{0, \ldots, \, n-1 \}$, there is
$\gamma_i \in \Z$ such that
\begin{equation}\label{art207}
u_{i-1} - \gamma_i \, u_i + u_{i+1} = 0 ~.
\end{equation}
The number $\gamma_i$ is $\det(u_{i-1}, \, u_{i+1})$.

\subsection{Ample divisors of $X$}
In this part, if $L = \sum_{i}{a_i D_i}$ is a Cartier divisor of $X$, we will give
a condition on the numbers $a_i$ which ensures that $L$ is ample.\\
By \cite[Proposition 6.4.4]{CLS}, for all $i \in \{0, \ldots, \, n-1 \}$, we have
\begin{equation}\label{art208}
\left\lbrace
\begin{array}{ll}
D_i \cdot D_i = - \gamma_i & \\
D_k \cdot D_i = 1 & \text{if} ~ k \in \{i-1, \, i+1 \} \\
D_k \cdot D_i = 0 & \text{if} ~ k \notin \{i-1, \, i, \, i+1 \}
\end{array}
\right.~~.
\end{equation}

\begin{prop}\label{art209}
A Cartier divisor $L = \sum_{i}{a_i \, D_i}$ is ample if and only if
for all $i \in \{0, \ldots, \, n-1 \}$, $~ a_{i+1} + a_{i-1} - \gamma_{i} \, a_i > 0~$.
\end{prop}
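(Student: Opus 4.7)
The plan is to apply the toric Kleiman criterion (Theorem \ref{art206}), which reduces ampleness to a positivity statement for the intersection numbers $L \cdot C$ as $C$ ranges over the torus-invariant irreducible curves of $X$. Since $X$ is a complete toric surface, the torus-invariant irreducible curves are precisely the prime divisors $D_0, \ldots, D_{n-1}$ associated to the rays $\rho_0, \ldots, \rho_{n-1}$ of $\Sigma$ (there is nothing below dimension one to worry about because $\dim X = 2$). Thus $L$ is ample if and only if $L \cdot D_i > 0$ for every $i \in \{0, \ldots, n-1\}$.

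Next I would expand $L \cdot D_i$ using bilinearity and the intersection table \eqref{art208}. Writing $L = \sum_k a_k D_k$, only three terms survive: the self-intersection contribution $a_i (D_i \cdot D_i) = -\gamma_i a_i$ and the two neighbour contributions $a_{i-1} (D_{i-1} \cdot D_i) = a_{i-1}$ and $a_{i+1} (D_{i+1} \cdot D_i) = a_{i+1}$. Hence
\[
L \cdot D_i = a_{i-1} + a_{i+1} - \gamma_i \, a_i,
\]
and the Kleiman criterion immediately translates into the stated system of inequalities.

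There is no real obstacle here: the structural input (identification of invariant curves on a smooth complete toric surface and the intersection formulas \eqref{art208}) has already been recorded, and the computation is purely bookkeeping. The only minor point worth mentioning explicitly is the indexing convention $u_n = u_0$ from Section \ref{artsec201}, so that the inequalities for $i = 0$ and $i = n-1$ are interpreted cyclically.
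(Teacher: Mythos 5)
Your proposal is correct and follows exactly the same route as the paper: identify the torus-invariant irreducible curves as the divisors $D_i$, compute $L \cdot D_i = a_{i-1} + a_{i+1} - \gamma_i \, a_i$ from the intersection table \eqref{art208}, and conclude via the toric Kleiman criterion (Theorem \ref{art206}). The extra remark about the cyclic indexing $u_n = u_0$ is a harmless clarification.
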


\begin{proof}
The set of torus-invariant irreducible curve of $X$ is
$\{ D_i : 0 \leq i \leq n-1 \}$. By (\ref{art208}), we have
\begin{equation}\label{art210}
L \cdot D_i = a_{i-1} + a_{i+1} - \gamma_i \, a_i ~.
\end{equation}
By using the toric Kleiman Criterion (Theorem \ref{art206}), we deduce that $L$ is
ample if and only if $~ a_{i+1} + a_{i-1} - \gamma_{i} \, a_i > 0~$ for all
$i \in \{0, \ldots, \, n-1 \}$.
\end{proof}

Let $L = \sum_{i}{a_i \, D_i}$ be a Cartier divisor of $X$. The polytope
corresponding to $L$ is given by
\begin{equation}\label{art211}
P = \{ m \in \Z^2 \, : \, \langle m \, , \, u_i \rangle \geq -a_i ~ \text{for} ~
i \in \{0, \ldots, \, n-1 \} \}
\end{equation}
and the facet of $P$ perpendicular to the vector $u_i$ is given by
\begin{equation}\label{art212}
P_i = \{m \in \Z^2 \, : \, \langle m \, , \, u_i \rangle = -a_i \} \cap P ~.
\end{equation}
The polytope $P$ is a polygon; hence any vertex of $P$ is precisely the intersection
of two facets. If for all $i \in \{0, \ldots, \, n-1 \}$, $P_i \neq \vide$, then
$P_i \cap P_{i+1}$ contains only one point; we set $\{ m_i \} = P_{i} \cap P_{i+1}$, we
have
$$
P = \Conv(m_i : \, 0 \leq i \leq n-1)
$$
and $P_i$ is the edge having $m_{i-1}$ and $m_i$ for extremities.

\begin{rem}
The point $m_i$ is the solution of the equations
$\langle m_i, \, u_i \rangle = -a_i$ and $\langle m_i, \, u_{i+1} \rangle = -a_{i+1}$.
\end{rem}

We recall that a lattice $M$ defines a measure $\nu$ on $M_{\R}$ as the pull-back of
the Haar measure on $M_{\R} / M$. The measure $\nu$ is translation invariant and
satisfies $\nu(M_{\R}/ M) = 1$.
Let $\vol(P_i)$ be the volume of $P_i$ with respect to the measure determined by the
lattice
$$
\Lambda = M \cap \{m \in \Z^2 \, : \, \langle m \, , \, u_i \rangle = -a_i \}
$$
in the affine span of $\Lambda$.

\begin{prop}\label{art213}
Let $P$ be the polytope corresponding to a Cartier divisor $L= \sum_{i}{a_i \, D_i}$
such that for all $i \in \{0, \ldots, \, n-1 \}$, $P_i \neq \vide$. We have
$$
\vol(P_i) = |a_{i+1} + a_{i-1} - \gamma_i \, a_i| ~.
$$
\end{prop}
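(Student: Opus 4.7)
The plan is to compute the edge vector $m_i - m_{i-1}$ directly from the defining linear equations of the two endpoints, and to express its length in terms of a chosen lattice generator of $M \cap u_i^{\perp}$.

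First, I would set up coordinates on the edge $P_i$. By the remark preceding the proposition, $m_{i-1}$ is the unique solution of the system $\langle m_{i-1}, u_{i-1} \rangle = -a_{i-1}$, $\langle m_{i-1}, u_i \rangle = -a_i$, and $m_i$ is the unique solution of $\langle m_i, u_i \rangle = -a_i$, $\langle m_i, u_{i+1} \rangle = -a_{i+1}$. In particular, $m_i - m_{i-1}$ belongs to the rank-one sublattice $M \cap u_i^{\perp}$, which is the lattice of translations associated with the affine lattice $\Lambda$. Since $\det(u_i, u_{i+1}) = 1$, the pair $(u_i, u_{i+1})$ is a $\Z$-basis of $N$; its dual basis in $M$ contains a unique vector $v_i$ characterized by $\langle v_i, u_i \rangle = 0$ and $\langle v_i, u_{i+1} \rangle = 1$. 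This $v_i$ is a primitive generator of $M \cap u_i^{\perp}$, hence a unit-length vector for the translation-invariant measure on the affine span of $\Lambda$ normalized so that a fundamental domain of $\Lambda$ has measure $1$.

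Next, I would use relation (\ref{art207}) in the form $u_{i+1} = \gamma_i u_i - u_{i-1}$ to evaluate $\langle m_{i-1}, u_{i+1}\rangle = \gamma_i(-a_i) - (-a_{i-1}) = -\gamma_i a_i + a_{i-1}$, so that
$$
\langle m_i - m_{i-1}, \, u_{i+1} \rangle = -a_{i+1} - (-\gamma_i a_i + a_{i-1}) = -(a_{i-1} + a_{i+1} - \gamma_i a_i).
$$
Writing $m_i - m_{i-1} = k \, v_i$ with $k \in \Z$ and pairing with $u_{i+1}$ immediately yields $k = -(a_{i-1} + a_{i+1} - \gamma_i a_i)$, whence $\vol(P_i) = |k| = |a_{i-1} + a_{i+1} - \gamma_i a_i|$.

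There is no real obstacle in this argument: once the primitive generator $v_i$ of $M \cap u_i^{\perp}$ has been identified via the smoothness condition $\det(u_i, u_{i+1}) = 1$, everything reduces to a $2 \times 2$ dual-basis computation. The identity could alternatively be extracted from the general formula $L \cdot D_i = \vol(P_i)$ valid for nef $L$, combined with (\ref{art210}), but the direct computation above has the advantage of not requiring any positivity hypothesis on $L$ beyond the nonemptiness of each facet.
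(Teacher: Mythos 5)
Your proof is correct, and it takes a genuinely different (and leaner) route than the paper's. The paper writes $u_i = \alpha_i e_1 + \beta_i e_2$ in coordinates, computes the edge vector $\overrightarrow{m_{i-1}m_i}$ explicitly, invokes the separate gcd lemma (Lemma \ref{art214}) to identify $\vol(P_i)$ with $\gcd$ of the coordinate differences, and then needs a three-way case analysis ($\alpha_i = 0$; $\alpha_i\beta_i \neq 0$ with $\alpha_{i-1}+\alpha_{i+1}=0$; $\alpha_i\beta_i \neq 0$ with $\alpha_{i-1}+\alpha_{i+1}\neq 0$) to match that gcd with $|a_{i+1}+a_{i-1}-\gamma_i a_i|$. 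You instead exploit smoothness directly: since $(u_i, u_{i+1})$ is a $\Z$-basis of $N$, the dual basis vector $v_i$ is a primitive generator of $M \cap u_i^{\perp}$, and pairing $m_i - m_{i-1} = k\,v_i$ with $u_{i+1}$ (after eliminating $u_{i-1}$ via relation (\ref{art207})) reads off $k$ in one line. This avoids both the gcd lemma and the case split, and it isolates exactly where smoothness enters. The only step you should make explicit is that $m_i$ and $m_{i-1}$ actually lie in $M$ (so that $m_i - m_{i-1} \in M \cap u_i^{\perp}$ rather than merely in $u_i^{\perp}$): this follows because $m_i = -a_i u_i^{*} - a_{i+1} u_{i+1}^{*}$ in the dual $\Z$-basis of $(u_i, u_{i+1})$, and similarly for $m_{i-1}$ with the basis $(u_{i-1}, u_i)$; the paper's formula $\vol(P_i) = \card(P_i \cap \Z^2) - 1$ tacitly relies on the same fact. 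Your closing remark is also apt: deducing the identity from $L \cdot D_i = \vol(P_i)$ would be circular here, since the paper obtains that equality as a consequence of this very proposition.
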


We first observe that :
\begin{lem}\label{art214}
If $x =(x_1, \, x_2)$ and $y = (y_1, \, y_2)$ are two point of $\Z^2$, then
$$
\card \left( \, \{ tx + (1-t)y : \, t \in [0 \, ; \, 1] \} \cap \Z^2 \, \right) - 1=
\gcd( |x_1 - y_1|, \, |x_2 - y_2| ) ~.
$$
\end{lem}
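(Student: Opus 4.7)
The plan is to reduce the count of lattice points to the classical statement that a lattice segment with primitive direction vector contains only its endpoints as lattice points.

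First I would set $d = \gcd(|x_1 - y_1|, |x_2 - y_2|)$ and handle the trivial case $x = y$ separately (then $d = 0$ and the left-hand side is $1 - 1 = 0$). Assuming $x \neq y$, I would write $y - x = d \cdot v$ where $v = (v_1, v_2) \in \Z^2$ satisfies $\gcd(|v_1|, |v_2|) = 1$; such a $v$ exists by the definition of $d$. Then the candidate lattice points $x + k v$ for $k = 0, 1, \ldots, d$ all lie on the closed segment from $x$ to $y$ (take $t = 1 - k/d$), giving at least $d + 1$ lattice points.

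The main step is to show these are the only lattice points on the segment. Suppose $z \in \Z^2$ lies on the segment, so $z = x + s v$ for some $s \in [0, d]$ (using the parametrization $z - x = (1-t)(y - x) = (1-t) d \, v$ and setting $s = (1-t)d$). I need to conclude $s \in \Z$. Since $z - x \in \Z^2$, we have $(s v_1, s v_2) \in \Z^2$. If $v_1 \neq 0$, write $s = p/q$ in lowest terms with $q > 0$; then $q \mid v_1$ and $q \mid v_2$, so $q \mid \gcd(|v_1|, |v_2|) = 1$, forcing $q = 1$ and $s \in \Z$. The case $v_1 = 0$ is symmetric (then $|v_2| = 1$ so $s v_2 \in \Z$ implies $s \in \Z$). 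Hence there are exactly $d + 1$ lattice points on the segment, and subtracting one gives $d$, as claimed.

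The only real subtlety is the primitivity argument in the last paragraph; everything else is bookkeeping. No serious obstacle is expected.
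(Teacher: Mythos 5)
Your proof is correct and rests on the same key step as the paper's: writing the parameter of a candidate lattice point in lowest terms and concluding that its denominator divides both coordinates of a primitive vector, hence equals $1$. The only difference is organizational --- you factor out $d=\gcd(|x_1-y_1|,|x_2-y_2|)$ first and reduce uniformly to the primitive case, whereas the paper translates $x$ to the origin and splits into cases ($y_1=0$, $y_2=0$, $\gcd=1$, $\gcd\geq 2$), essentially asserting the last case; your reduction is cleaner and handles that case rigorously.
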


\begin{proof}
We can assume that $(x_1, \, x_2) = (0, \, 0)$. Let
$A = \{ t \, y : \, t \in [0 \, ; \, 1] \} \cap \Z^2$. If $y_1 = 0$, then
$\card(A) = y_2 + 1$ and when $y_2 = 0$, $\card(A) = y_1 + 1$.
For the case $y_1 \neq 0$ and $y_2 \neq 0$, we can reduce the study to the case
where $y_1 >0$ and $y_2 >0$.
\\[0.25cm]
{\it First case : We assume that $\gcd(y_1, \, y_2) = 1$.} If $t=0$ or $t=1$,
then $t \, y \in A$. If there is $t \in \, ]0 \, ; \, 1[$ such that $t \, y \in A$, then
$t = \dfrac{p}{q}$ with $p, \, q \in \N$ such that $1 \leq p < q$.
As $\dfrac{p \, y_1}{q}, \, \dfrac{p \, y_2}{q} \in \N$, we deduce that $q$ divides
$y_1$ and $y_2$. This is in contradiction with the fact that $\gcd(y_1, \, y_2) = 1$.
Thus, we deduce that $\card(A) = 2$.
\\[0.25cm]
{\it Second case : $\gcd(y_1, \, y_2) \geq 2$}. We have $t \, y \in A$ if and only if
$t = \dfrac{k}{\gcd(y_1, \, y_2)}$ with
$k \in \{0, \, 1, \ldots, \, \gcd(y_1, \, y_2) \}$.
Thus, $\card(A) = \gcd(y_1, \, y_2) + 1$.
\end{proof}

\begin{proof}[Proof of Proposition \ref{art213}]
We have $\vol(P_i) = \card(P_i \cap \Z^2) -1$.
We write $u_i = \alpha_i \, e_1 + \beta_i \, e_2$ with $\alpha_i, \, \beta_i \in \Z$.
The equations
$\langle m_i, \, u_i \rangle = -a_i$ and $\langle m_i, \, u_{i+1} \rangle = -a_{i+1}$
give
$ m_i = \dbinom{a_{i+1} \, \beta_i - a_i \, \beta_{i+1}}{-a_{i+1} \, \alpha_i +
a_i \, \alpha_{i+1}} \,$.
We have
$$
\overrightarrow{m_{i-1} \, m_i} = \dbinom{\beta_i(a_{i+1} + a_{i-1}) -
a_i(\beta_{i+1} + \beta_{i-1})}{-\alpha_i(a_{i+1} + a_{i-1}) +
a_i(\alpha_{i-1} + \alpha_{i+1})} ~.
$$
The equalities $\det(u_{i-1}, \, u_i) = \det(u_i, \, u_{i+1}) = 1$ give
\begin{equation}\label{art215}
\alpha_i \left(\beta_{i-1} + \beta_{i+1} \right) = \beta_i \left( \alpha_{i-1} +
\alpha_{i+1} \right) ~~.
\end{equation}
{\it First case : We assume that $\alpha_i = 0$.}
As $\gcd(\alpha_i, \, \beta_i)= 1$, we deduce that $\alpha_{i-1} + \alpha_{i+1} = 0$
and by the relation $\det(u_{i-1}, \, u_i) = 1$, we have $\alpha_{i-1} \, \beta_i = 1$,
i.e $\alpha_{i-1} = \beta_i = \pm 1$. Hence,
$$
\gamma_i = \alpha_{i-1} \, \beta_{i+1} - \alpha_{i+1} \, \beta_{i-1} =
\beta_i \, \beta_{i+1} + \beta_i \, \beta_{i-1} = \beta_i \left( \beta_{i-1} +
\beta_{i+1} \right) ~.
$$
By Lemma \ref{art214}, we have
\begin{align*}
\vol(P_i) & =
\gcd( |\beta_i (a_{i+1} + a_{i-1}) - a_i(\beta_{i+1} + \beta_{i-1})|, \, 0)
\\ & =
\gcd( |(a_{i+1} + a_{i-1}) - a_i \, \beta_i(\beta_{i+1} + \beta_{i-1})|, \, 0)
\\ & =
\gcd( |a_{i+1} + a_{i-1} - a_i \, \gamma_i|, \, 0)
\\ & =
|a_{i+1} + a_{i-1} - \gamma_i \, a_i|
\end{align*}
The case $\beta_i =0$ is similar to the case $\alpha_i=0$.
\\[0.25cm]
{\it Second case : $\alpha_i \, \beta_i \neq 0$ and $\alpha_{i-1} + \alpha_{i+1} = 0$.}
By (\ref{art215}), we have $\beta_{i-1} + \beta_{i+1} = 0$.
Hence, $u_{i-1}= - u_{i+1}$ and $\gamma_i = 0$. Thus,
\begin{align*}
\vol(P_i) &= \gcd( |\beta_i(a_{i-1} + a_{i+1})|, \, |\alpha_i(a_{i-1} + a_{i+1})|)
\\ & =
|a_{i-1} + a_{i+1}| \gcd( |\beta_i|, \, |\alpha_i|)
\\ & =
|a_{i-1} + a_{i+1}|
\end{align*}
{\it Third case : $\alpha_i \, \beta_i \neq 0$ and $\alpha_{i-1} + \alpha_{i+1} \neq 0$.}
As $\gcd(\alpha_i, \, \beta_i) = 1$, by (\ref{art215}) we have
$\alpha_{i-1} + \alpha_{i+1} = \alpha_i$ and $\beta_{i-1} + \beta_{i+1} = \beta_i$.
Hence,
$$
\vol(P_i)=
\gcd( |\beta_i(a_{i+1} + a_{i-1} - a_i)|, \, |\alpha_i(a_{i+1} + a_{i-1} - a_i)|)
= |a_{i+1} + a_{i-1} - a_i |
$$
To finish the proof, we must show that $\gamma_i = 1$. We have
\begin{align*}
\gamma_i & = \alpha_{i-1} \, \beta_{i+1} - \alpha_{i+1} \, \beta_{i-1} \\ &=
\alpha_{i-1} (\beta_i - \beta_{i-1} ) - \beta_{i-1} (\alpha_i - \alpha_{i-1}) \\ &=
\alpha_{i-1} \, \beta_i - \beta_{i-1} \, \alpha_i \\ & = 1
\end{align*}
Hence, we get the proof.
\end{proof}

With this proof, we observe that : if $P$ is the polytope corresponding to the
polarized toric surface $(X, \, L)$, then $L \cdot D_i = \vol(P_i)$.
It is a particular case of the result of \cite[Section 11]{Dan78}.

\section{Smooth toric log del Pezzo pairs}\label{artsec202}

Let $X$ be a smooth complete toric surface associated to a fan $\Sigma$. If $D$ is a
reduced $T$-invariant Weil divisor of $X$, we will give a description of all pairs
$(X, \, D)$ such that $-(K_X + D)$ is ample. By (\ref{art204}), we have
$$
\card( \Sigma(1)) = 2 + \rk( \Pic(X)) ~.
$$
We keep the notations of the previous section.

\subsection{Analysis}
Let $\Delta$ be a subset of
$\{0, \ldots, \, n-1 \}$ and $\disp D = \sum_{i \in \Delta}{D_i} \,$.

\begin{prop}\label{art216}
Let $X$ be a complete smooth toric surface. If $\card( \Delta ) \geq 3$,
then $-(K_X + D)$ is not ample.
\end{prop}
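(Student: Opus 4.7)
The plan is to suppose that $L := -(K_X + D)$ is ample and derive a contradiction by locating the origin of $M_{\R}$ inside the polytope of $L$.

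By Theorem \ref{art205}, we have $L = \sum_{i \notin \Delta}{D_i}$, so in the notation $L = \sum_{i}{a_i \, D_i}$ of Section \ref{artsec201}, the coefficients are $a_i = 0$ when $i \in \Delta$ and $a_i = 1$ otherwise. Since every $a_i$ is non-negative, the origin of $M_{\R}$ satisfies each defining inequality of the polytope $P$ in (\ref{art211}), so $0 \in P$. Moreover, for every $i \in \Delta$ the equality $\langle 0, \, u_i \rangle = 0 = -a_i$ shows that $0$ lies on the facet $P_i$ defined in (\ref{art212}).

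If $L$ were ample, then $P$ would be a two-dimensional lattice polygon whose normal fan is $\Sigma$; in particular, combining Proposition \ref{art213} with the toric Kleiman Criterion (Theorem \ref{art206}), each of the $n$ facets $P_0, \ldots, P_{n-1}$ would be a non-degenerate edge of positive length, and consecutive edges $P_i$ and $P_{i+1}$ would meet only at the pairwise distinct vertices $m_i$. But in any such convex polygon, a boundary point belongs to at most two facets — to a single facet if it lies in the relative interior of an edge, and to exactly two adjacent facets if it is a vertex. Since $\card( \Delta ) \geq 3$, the origin would lie on at least three distinct facets $P_i$, which is the required contradiction.

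The heart of the argument is this clean combinatorial observation about the origin in $P$; the only technical point to check carefully is that ample divisors on a smooth complete toric surface really do produce polygons whose $n$ facets are non-degenerate and are indexed bijectively by the rays of $\Sigma$, but this is already built into the polytope framework developed in Section \ref{artsec201}.
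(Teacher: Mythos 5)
Your proof is correct and follows essentially the same route as the paper: both locate the origin on the facets $P_i$ for $i \in \Delta$ via the vanishing coefficients $a_i = 0$, and both derive a contradiction from the fact that a point of an ample polygon's boundary lies on at most two facets while $\card(\Delta) \geq 3$ forces it onto at least three. Your write-up is in fact slightly more careful than the paper's (you explicitly note that $0 \in P$ because all $a_i \geq 0$, and you justify the non-degeneracy of the facets via Proposition \ref{art213} and the Kleiman criterion), but the underlying argument is identical.
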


\begin{proof}
By Theorem \ref{art205}, we have
$$
-(K_X + D) = \sum_{i \in \Delta'}{D_i}
$$
where $\Delta' = \{0, \ldots, \, n-1\} \setminus \Delta$.
Let $P$ be the polytope corresponding to $-(K_X + D)$.
We set $\partial P = P_0 \cup P_1 \cup \ldots \cup P_{n-1}$.
By (\ref{art212}), for all $i \in \Delta$, $0 \in P_i$. Hence, $0 \in \partial P$ is a
vertex.
According to Proposition \ref{art209} and Proposition \ref{art213}, if $P$ is the
polytope corresponding to an ample divisor, then all vertex $v$ of $P$ is precisely
the intersection of two facets $P_{j}$ and $P_{j+1}$ for some
$j \in \{0, \ldots, \, n-1 \}$. With this fact, we deduce that $-(K_X + D)$ is not
ample.
\end{proof}

\begin{prop}\label{art217}
Let $X$ be a complete smooth toric surface such that $\rk \Pic(X) \geq 3 \,$. If
$\card(\Delta) \in \{1, \, 2 \}$, then $-(K_X + D)$ is not ample.
\end{prop}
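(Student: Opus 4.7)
The plan is to reduce the problem to three cases via cyclic relabeling: $\Delta = \{0\}$, $\Delta = \{0, 1\}$, and $\Delta = \{0, k\}$ with $2 \leq k \leq n-2$. The non-consecutive pair falls to a combinatorial argument about the polytope, and the other two cases reduce to an angular identity for $\gamma_i$ that becomes impossible as soon as the fan has at least five rays.

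For the non-consecutive pair $\Delta = \{0, k\}$ with $2 \leq k \leq n-2$, I would repeat the argument from the proof of Proposition~\ref{art216}: since $a_0 = a_k = 0$, the origin satisfies $0 \in P_0 \cap P_k$. If $-(K_X+D)$ were ample, then $P$ would be a full-dimensional convex polygon whose edges $P_0, \ldots, P_{n-1}$ inherit the cyclic order of $\Sigma(1)$, and in such a polygon two non-consecutive edges are disjoint. This contradicts $0 \in P_0 \cap P_k$.

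For the remaining cases, Proposition~\ref{art209} applied to $-(K_X+D) = \sum_{i \notin \Delta}{D_i}$ at indices adjacent to $\Delta$ immediately produces sign constraints on specific $\gamma_i$: if $\Delta = \{0\}$, then $\gamma_1 \leq 0$ and $\gamma_{n-1} \leq 0$, whereas if $\Delta = \{0, 1\}$, then $\gamma_2 \leq 0$ and $\gamma_{n-1} \leq 0$. To turn these local inequalities into a global obstruction, I would exploit the identity $\gamma_i = \det(u_{i-1}, u_{i+1})$ observed just after~\eqref{art207}. Writing $\alpha_j$ for the angular measure of the cone $\sigma_j$, the counterclockwise cyclic order of the rays places $\alpha_{i-1} + \alpha_i$ in $(0, 2\pi)$, so
$$
\gamma_i \,=\, |u_{i-1}| \, |u_{i+1}| \, \sin(\alpha_{i-1} + \alpha_i) ~,
$$
and therefore $\gamma_i \leq 0$ if and only if $\alpha_{i-1} + \alpha_i \geq \pi$.

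Plugging back in, $\Delta = \{0\}$ forces $\alpha_0 + \alpha_1 \geq \pi$ and $\alpha_{n-2} + \alpha_{n-1} \geq \pi$, while $\Delta = \{0, 1\}$ forces $\alpha_1 + \alpha_2 \geq \pi$ and $\alpha_{n-2} + \alpha_{n-1} \geq \pi$. The hypothesis $n \geq 5$ ensures that the four indices involved in each case are pairwise distinct, so summing already yields $2\pi$ on a proper subset of the cones $\sigma_0, \ldots, \sigma_{n-1}$. Since $\sum_{j=0}^{n-1} \alpha_j = 2\pi$ and each $\alpha_j > 0$, the remaining cones (non-empty because $n \geq 5$) would contribute strictly positive angle beyond $2\pi$, the desired contradiction. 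The step requiring the most care is the angular identity for $\gamma_i$: I must verify that the cyclic ordering of the rays truly places $\alpha_{i-1} + \alpha_i$ in $(0, 2\pi)$ and not on another branch of $\sin$, so that its sign faithfully reflects that of $\gamma_i$.
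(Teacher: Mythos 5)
Your proof is correct and follows essentially the same route as the paper: the same three-way case split (singleton, adjacent pair, non-adjacent pair), the same polytope argument for the non-adjacent pair, and the same starting inequalities $\gamma_i \le 0$ extracted from the toric Kleiman criterion in the other two cases. The only difference is cosmetic: you derive the final contradiction by summing the angles of the cones $\sigma_j$ to exceed $2\pi$, whereas the paper places $u_2$ and $u_{n-2}$ (resp.\ $u_4$ and $u_{n-1}$) in the half-plane regions $A$, $B$, $C$ and contradicts the cyclic ordering of the rays --- the same obstruction expressed in different bookkeeping.
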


\begin{proof}
We start with the case $\card \Delta = 1$. We assume that $D = D_0 \,$. We have
$$
-(K_X + D) \cdot D_{n-1} = 1 - \gamma_{n-1} \quad \text{and} \quad
-(K_X + D) \cdot D_{1} = 1 - \gamma_1 ~~.
$$
If $-(K_{X} + D)$ is ample, then $\gamma_{n-1} \leq 0$ and $\gamma_1 \leq 0 \,$.
Let $A = \{\beta \, u_1 - \alpha \, u_0 : \, \alpha, \, \beta \geq 0 \}$,
$B = \{ -\alpha \, u_0 - \beta \, u_1 : \, \alpha, \, \beta \geq 0 \}$ and
$C = \{ \alpha \, u_0 - \beta \, u_1 : \, \alpha, \, \beta \geq 0 \}$.
As $X$ is complete, the fan $\Sigma$ of $X$ satisfies
$$
\bigcup_{\sigma \in \Sigma}{\sigma} = A \cup B \cup C \cup \Cone(u_0, \, u_1) ~.
$$
As $\gamma_1 = \det(u_0, \, u_2)$ and $\gamma_{n-1} = \det(u_{n-2}, \, u_0)$,
we deduce that $u_2 \in B$ and \linebreak $u_{n-2} \in A$.
\begin{center}
\begin{tikzpicture}
\fill[color = gray!20] (2,2) --(0,0)--(-2, 0) -- (-2, 2) ;
\fill[color = gray!80] (-2, -2) -- (0,0)--(-2, 0) ;
\fill[color = gray!40] (-2, -2) --(0,0)--(2, 0) -- (2, -2) ;

\draw[line width = 0.3mm, dashed] (0, 0) -- (-2, 0) (1,0)--(2,0) ;
\draw[line width = 0.3mm, dashed] (0, 0) -- (-2, -2) (1,1)--(2,2) ;

\draw[line width = 0.5mm, ->] (0, 0) -- (1, 0) ;
\draw[line width=0.5mm, ->] (0, 0) -- (1, 1) ;

\draw (1.3, 0) node[below]{$u_0$} ;
\draw (1.3, 1) node[below]{$u_1$} ;
\draw (-0.5, 1.5) node[below]{$A$} ;
\draw (-1.5, -0.5) node[below]{$B$} ;
\draw (0.5, -1) node[below]{$C$} ;
\end{tikzpicture}
\end{center}
When $n \geq 5$, this is in
contradiction with the fact that if $u_2 \in B$, then $u_{n-2}$ is in $B$ or $C$.
Thus, we deduce that $-(K_X + D)$ is not ample.
\\[0.25cm]
We now assume that $\card(\Delta) = 2$. After renumbering the indices, we can assume
that $D = D_1 + D_j$ with $j \in \{2, \ldots, \, n-1 \}$.\\
{\it First case.} We assume that $j \in \{3, \ldots, \, n-1 \}$. Let $P$ be the
polytope of $-(K_X + D)$. As $0 \in P_1$ and $0 \in P_j$, we deduce that $0$ is a vertex
of $P$. Hence, for $k \in \{2, \ldots, \, j-1 \}$, we have $\vol(P_k) = 0$.
By Propositions \ref{art209} and \ref{art213}, we deduce that $-(K_X + D)$ is not ample.
\\[0.25cm]
{\it Second case.} We assume that $D = D_1 + D_2$. We have
$$
-(K_X + D) \cdot D_3 = 1 - \gamma_3 \quad \text{and} \quad
-(K_X + D) \cdot D_0 = 1 - \gamma_0 ~.
$$
Let $A = \{ -\alpha \, u_1 + \beta \, u_2 : \, \alpha, \, \beta \geq 0 \}$,
$B = \{ -\alpha \, u_1 - \beta \, u_2 : \, \alpha, \, \beta \geq 0 \}$ and
$C = \{ \alpha \, u_1 - \beta \, u_2 : \, \alpha, \, \beta \geq 0 \}$.
\begin{center}
\begin{tikzpicture}
\fill[color = gray!20] (2,2) --(0,0)--(-2, 0) -- (-2, 2) ;
\fill[color = gray!80] (-2, -2) -- (0,0)--(-2, 0) ;
\fill[color = gray!40] (-2, -2) --(0,0)--(2, 0) -- (2, -2) ;

\draw[line width = 0.3mm, dashed] (0, 0) -- (-2, 0) (1,0)--(2,0) ;
\draw[line width = 0.3mm, dashed] (0, 0) -- (-2, -2) (1,1)--(2,2) ;

\draw[line width = 0.5mm, ->] (0, 0) -- (1, 0) ;
\draw[line width=0.5mm, ->] (0, 0) -- (1, 1) ;

\draw (1.3, 0) node[below]{$u_1$} ;
\draw (1.3, 1) node[below]{$u_2$} ;
\draw (-0.5, 1.5) node[below]{$A$} ;
\draw (-1.5, -0.5) node[below]{$B$} ;
\draw (0.5, -1) node[below]{$C$} ;
\end{tikzpicture}
\end{center}
If $-(K_X + D)$ is ample, then $\gamma_3 \leq 0$ and $\gamma_0 \leq 0$.
As $\gamma_3 = \det(u_2, \, u_4)$ and $\gamma_{0} = \det(u_{n-1}, \, u_1)$,
we deduce that $u_4 \in C$ and $u_{n-1} \in A$.
If $n \geq 6$, this situation contradicts the positioning order of vectors $u_i$.
If $n = 5$, we have $u_4 \in A$ and $u_4 \in C$, this is not possible.
\\
Thus, we deduce that $-(K_X + D)$ is not ample.
\end{proof}

If $\Delta \neq \vide$, according to Propositions \ref{art216} and \ref{art217}, it is
enough to study the positivity of $-(K_X + D)$ when $\rk \Pic(X) \in \{1, \, 2 \}$ and
$\card(\Delta) \in \{1, \, 2 \} \,$.
The Propositions \ref{art216} and \ref{art217} prove Theorem \ref{art201}.

\subsection{Proofs of Theorems \ref{art202} and \ref{art203}}
Let $(e_1, \, e_2)$ be the standard basis of $\Z^2$.
The rays of the fan of $\Pm^2$ are the half-line generated by $u_1 = e_1, \, u_2 = e_2$
and $u_0 = -(e_1+e_2)$. The divisors $D_i$ of $\Pm^2$ defined in Theorem \ref{art202}
correspond to the divisors associated to the ray $\Cone(u_i)$.

\begin{proof}[Proof of Theorem \ref{art202}]
We have the linear equivalence $D_1 \sim D_0$ and $D_2 \sim D_0$. By Theorem \ref{art205},
we have $K_X = -(D_0 + D_1 + D_2)$, i.e $-K_X \sim 3 \, D_0$. As $D_0$ is ample,
we deduce that $-(K_X + D)$ is not ample if and only if $D = D_1 + D_2 + D_3$.
\end{proof}

$\,$\\[-0.25cm]
We now assume that $X = \Pm \left( \Oc_{\Pm^1} \oplus \Oc_{\Pm^1}(r) \right)$ with
$r \in \N$.
The rays of the fan of $X$ are the half lines generated by the vectors $u_1 = e_1$,
$u_2 = e_2$, $u_3 = -e_1 + r \, e_2$ and $u_0 = -e_2 \,$.
\begin{center}
\begin{tikzpicture}[scale=1]
\fill[gray!25] (0, 2) -- (0, 0) -- (2, 0) -- (2, 2);
\fill[gray!50] (0, -2) -- (0, 0) -- (2, 0) -- (2, -2) ;
\fill[gray!25] (-2, 2) -- (-2, -2) -- (0, -2) -- (0, 0) -- (-1, 2) ;
\fill[gray!50] (-1, 2) -- (0, 0) -- (0, 2) ;
\draw[line width = 0.5mm] (0, -2.01) -- (0, 2.01) ;
\draw[line width = 0.5mm] (0, 0) -- (2.01, 0) ;
\draw[line width = 0.5mm] (0, 0) -- (-1, 2) ;
\draw[dashed] (-2.01, 0) -- (0, 0) ;
\draw (1, 0) node{$\bullet$} node[above]{$u_1$} ;
\draw (0, 1) node{$\bullet$} node[right]{$u_2$} ;
\draw (-0.75, 1.5) node{$\bullet$} node[left]{$u_3$} ;
\draw (0, -1) node{$\bullet$} node[left]{$u_0$} ;
\end{tikzpicture}
\end{center}
The numbers $\gamma_i$ defined in (\ref{art207}) are given by
\begin{equation}\label{art218}
\gamma_0 = -r \qquad \gamma_1 = 0 \qquad \gamma_2 = r \qquad \gamma_3 = 0 ~~.
\end{equation}
By Proposition \ref{art209}, the divisor
$~ L = a_0 \, D_0 + a_1 \, D_1 + a_2 \, D_2 + a_3 \, D_3 ~$
is ample if and only if
$$
a_0 + a_2 >0 \quad , \quad
a_1 + a_3 > r \, a_2 \quad , \quad
a_1 + a_3 > - r \, a_0
$$
if and only if
\begin{equation}\label{art219}
a_0 + a_2 >0 \quad \text{and} \quad a_1 + a_3 > r \, a_2 ~.
\end{equation}
We have the linear equivalence of divisors
\begin{equation}\label{art220}
D_1 \sim D_3 \quad \text{and} \quad D_2 \sim D_0 - r \, D_3 ~.
\end{equation}
The divisor $D_i$ of $X$ corresponding the ray $\Cone(u_i)$ defined here correspond
to those defined in Theorem \ref{art203}.

\begin{proof}[Proof of Theorem \ref{art203}]
As $-K_X = D_0 + D_1 +D_2 + D_3$, by (\ref{art220}), we have
\begin{align*}
-K_{X} \sim & ~ 2 \, D_0 + (2-r)D_3 \\
-(K_X + D_0) \sim & ~ D_0 + (2-r)D_3 \\
-(K_X + D_3) \sim & ~ 2 \, D_0 + (1-r)D_3 \\
-(K_X + D_0 + D_3) \sim & ~ D_0 + (1-r)D_3
\end{align*}
and
\begin{align*}
-(K_X + D_2) \sim & ~ D_0 + 2 \, D_3 \\
-(K_X + D_2 + D_3) \sim & ~ D_0 + D_3 \\
-(K_X + D_2 + D_0) \sim & ~ 2 \,D_3 \\
-(K_X + D_1 + D_3) \sim & ~ 2 \,D_0 -r D_3
\end{align*}
If $a_1 = a_2 = 0$, the condition (\ref{art219}) becomes $a_0 >0$ and $a_3 >0$.
This allows us to conclude.
\end{proof}


\begin{thebibliography}{ABCDEF}

\bibitem[CLS11]{CLS}
D. \textsc{Cox}, J. \textsc{Little}, \textsc{Schenck}, {\it Toric varieties}.
AMS Graduate Studies in Mathematics volume : 124 ; 2011.


\bibitem[Dais07]{Dais07}
Dimitrios I. \textsc{Dais},
{\it Geometric combinatorics in the study of compact toric surfaces}.
In "Algebraic and Geometric Combinatorics" (edited by C. Athanasiadis et. al.),
Contemporary Mathematics, Vol. 423, American Mathematical Society, 2007, pp. 71-123.

\bibitem[Dais17]{Dais17}
Dimitrios I. \textsc{Dais}, {\it Toric log del Pezzo surfaces with one singularity}.
ArXiv preprint 1705.06359, \url{https://arxiv.org/abs/1705.06359}.


\bibitem[Dan78]{Dan78}
V. I. \textsc{Danilov}, { \it The geometry of toric varieties}.
Uspekhi Mat. Nauk, 33 (2(200)) : 85–134, 247, 1978.

\bibitem[Ful93]{Ful93}
William \textsc{Fulton}, {\it Introduction to toric varieties}. Princeton
University Press, Princeton New Jersey, 1993.

\bibitem[KKN10]{KKN10}
A. M. \textsc{Kasprzyk}, M. \textsc{Kreuzer}, B. \textsc{Nill},
{\it On the combinatorial classification of toric log del Pezzo surfaces}.
LMS J. Comput. Math. 13 (2010), 33-46.

\bibitem[Kle88]{Kle88}
Peter \textsc{Kleinschmidt}, {\it A classification of toric varieties with few
generators}. Aequationes Math. 35 (1988), 254–266.

\bibitem[Mat02]{Mat02}
Kenji \textsc{Matsuki}, {\it Introduction to Mori’s Program}, Universitext,
Springer-Verlag, 2002.

\end{thebibliography}
\end{document}